\newtheorem{thm}{Theorem}
\newtheorem{lemma}[thm]{Lemma}
\newtheorem{corollary}[thm]{Corollary}
\theoremstyle{definition}
\newtheorem{remark}[thm]{Remark}
\def\o {\mbox{Orb}}
\def\t {\tau}
\def\b {\beta}
\def\d {\delta}
\def\p {\mbox{Per}}
\def\a {\alpha}
\newcommand{\Z}{\mathbb{Z}}
\newcommand{\Q}{\mathbb{Q}}
\newcommand{\QQ}{\overline{\Q}}
\newcommand{\R}{\mathbb{R}}
\newcommand{\C}{\mathbb{C}}
\newcommand{\e}{\epsilon}
\newcommand{\g}{\gamma}
\newcommand{\E}{\mathcal{E}}
\newcommand{\T}{\mathcal{T}}
\newcommand{\M}{\mathcal M}
\begin{document}

\title{Algebraic periodic points of transcendental entire functions}

\author[D. Krumm]{David Krumm}
\address{Escuela de Matemática\\ Universidad de Costa Rica\\
San Jos\'e, Costa Rica}
\email{david.krumm@ucr.ac.cr}
\urladdr{http://maths.dk}

\author[D. Marques]{Diego Marques}
\address{Departamento de Matem\'atica\\
Universidade de Bras\'ilia\\
Bras\'ilia, DF\\
Brazil}
\email{diego@mat.unb.br}

\author[C. G. Moreira]{Carlos Gustavo Moreira}
\address{Instituto de Matem\'atica Pura e Aplicada\\
Rio de Janeiro, RJ\\
Brazil}
\email{gugu@impa.br}

\author[P. Trojovsk\'y]{Pavel Trojovsk\'y}
\address{Faculty of Science, University of Hradec Kr\'alov\'e\\
Czech Republic}
\email{pavel.trojovsky@uhk.cz}

\keywords{Mahler problem, arithmetic dynamics, transcendental functions}
\subjclass[2020]{Primary 37P10; Secondary 11Jxx}

\begin{abstract}
We prove the existence of transcendental entire functions $f$ having a property studied by Mahler, namely that $f(\QQ)\subseteq \QQ$ and $f^{-1}(\QQ)\subseteq \QQ$, and in addition having a prescribed number of $k$-periodic algebraic orbits, for all $k\geq 1$. Under a suitable topology, such functions are shown to be dense in the set of all entire transcendental functions.
\end{abstract}

\maketitle

\section{Introduction}\label{intro_section}

Let $D$ be a subset of $\C$. Recall that a function $f:D\to\C$ is called \emph{transcendental} if the only two-variable polynomial $P\in\C[x,y]$ such that $P(z,f(z))=0$ for all $z\in D$ is the zero polynomial. For the purposes of this article, let us define a \emph{Mahler function} to be a transcendental entire function $f:\C\to\C$ with the property that
\begin{equation}\label{Qbar_invariance}
f(\overline\Q)\subseteq\overline\Q\quad\text{and}\quad f^{-1}(\overline\Q)\subseteq\overline\Q,
\end{equation}
where $\overline\Q$ denotes the field of the algebraic numbers, i.e., the algebraic closure of $\Q$ in $\C$. If $f$ is a Mahler function and $K$ is a subset of $\C$ such that $K\cap\overline\Q$ is dense in $\C$ (for instance, $K=\Q(i)$), we say that $f$ is \emph{defined over K} if every coefficient of the Taylor series
\[
f(z)=\sum_{n=0}^\infty a_nz^n
\]
belongs to $K$. This terminology is motivated by a question posed by Mahler \cite{mahler}*{p. 53} in 1976: \textit{Does there exist a transcendental entire  function $f$ for which \eqref{Qbar_invariance} holds, and such that every coefficient of its Taylor series is rational?} Mahler mentions that the answer seems to be unknown even if the coefficients are allowed to be arbitrary \emph{complex} numbers. 

The question of existence of Mahler functions was settled in recent work of the second and third authors \cite{marques-moreira1}, who proved that there exist uncountably many Mahler functions defined over $\Q$, thus answering Mahler's question. In this article we study some of the dynamical properties of Mahler functions.

By virtue of the property \eqref{Qbar_invariance}, every Mahler function $f$ can be regarded as a discrete dynamical system on $\overline\Q$. One can then ask how many algebraic periodic orbits $f$ has of every period. Our main result is Theorem \ref{main_thm} below, which states, loosely speaking, that given any transcendental entire function $g$, one can obtain -- via a small perturbation of $g$ -- a Mahler function having prescribed dynamical behavior on $\overline\Q$. 

To state our results precisely, we introduce some notation and terminology. For every dynamical system $f:\overline\Q\to\overline\Q$, we denote by $\p(k, f)$ the set of all $k$-periodic points of $f$, i.e., the algebraic numbers $\alpha$ for which $k$ is the smallest positive integer such that $f^k(\alpha)=\alpha$ (where $f^1=f$ and $f^k=f\circ f^{k-1}$ for $k\geq 2$). The \emph{orbit} of an algebraic number $\alpha$ is the set $\{\a, f(\a),f^2(\a),\ldots\}$. The orbit of a $k$-periodic point is a \textit{$k$-cycle}. Finally, we denote by $\o(k, f)$ the set of all $k$-cycles of $f$. Note that $\# \o(k, f) = \# \p(k, f)/k$.

Let $\E$ denote the set of all entire functions. We place a topology on $\E$ with a sub-base consisting of sets $V_{g,\vartheta}$ for every function $g\in\E$ and every sequence $\vartheta$ of positive real numbers: if $g(z)=\sum_{n=0}^\infty a_nz^n$ and $\vartheta=(\theta_k)_{k\geq 0}$, the open set $V_{g,\vartheta}$ consists of all functions $\sum_{n=0}^{\infty}b_nz^n \in \E$ satisfying $|a_n-b_n|<\theta_n$ for all $n$. Note that, in this topology, the set $\T$ of all transcendental entire functions is open and dense in $\E$, since every non-polynomial entire function is transcendental.

\begin{thm}\label{main_thm}
Let $K$ be a subset of $\C$ such that $K\cap\overline\Q$ is dense in $\C$ (in the Euclidean topology), and let $\sigma=(s_k)_{k\ge 0}$ be a sequence in $\Z_{\geq 0}\cup \{\infty\}$. Let $\M(\sigma,K)$ denote the set of all Mahler functions $f$ that are defined over $K$ and satisfy $\#\o(k,f)=s_k$ for all $k\ge 0$. Then $\M(\sigma,K)$ is dense in $\T$ (and thus in $\E$).
\end{thm}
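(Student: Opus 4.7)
The plan is to extend the inductive construction of Marques and Moreira \cite{marques-moreira1} -- which produces Mahler functions defined over a dense subfield of $\C$ -- by simultaneously controlling the periodic structure. Since finite intersections of the sub-basis generate the topology of $\E$ and $\T$ is open, it suffices to show that for every $g \in \T$ and every positive sequence $\vartheta$ with $V_{g,\vartheta} \subseteq \T$ the intersection $\M(\sigma, K) \cap V_{g,\vartheta}$ is nonempty. Fix an enumeration $\overline{\Q} = \{\alpha_1, \alpha_2, \ldots\}$. For each $k \geq 1$ with $s_k > 0$, preselect pairwise disjoint $k$-tuples $(\beta_{k,i,1},\ldots,\beta_{k,i,k})$ of algebraic numbers ($1 \leq i \leq s_k$, or $i \in \N$ if $s_k = \infty$), on which $f$ is declared to act as the cyclic permutation $\beta_{k,i,j} \mapsto \beta_{k,i,(j \bmod k)+1}$. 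The union of these tuples forms a set $C$, and the remaining algebraic numbers $A := \overline{\Q} \setminus C$ must be aperiodic under $f$.

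I would build $f(z) = \sum_{n\geq 0} b_n z^n$ as the limit of polynomial partial sums $P_N(z) = \sum_{n=0}^{d_N} b_n z^n$ with $b_n \in K$. At stage $N$, given $P_{N-1}$ and suitable control radii $R_N \uparrow \infty$, I would select new coefficients $b_{d_{N-1}+1},\ldots,b_{d_N}$ by Lagrange-type interpolation to accomplish three things simultaneously. First, $P_N$ must attain the prescribed value at every $\alpha_n$ with $n \leq N$: either the next element in its cycle if $\alpha_n \in C$, or a \emph{fresh} target in $A$ (one with index greater than $N$ and not yet designated as an image) if $\alpha_n \in A$. Second, the increments must satisfy $|b_n - a_n| < \theta_n$ for all affected indices; this is feasible by taking $d_N$ large, so the finite interpolation correction is spread across many high-degree coefficients, each individually small. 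Third, and most delicately, every solution of $P_N(z) = \alpha_n$ in the disc $|z| \leq R_N$ must belong to the list of algebraic preimages already accounted for by the construction. The last condition is the Rouch\'e-type control inherited from \cite{marques-moreira1}, and its preservation under subsequent perturbations dictates how small the tails $|P_M - P_N|$ on $|z| \leq R_N$ must be for $M > N$.

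Passing to the limit, the bound on $|b_n - a_n|$ places $f \in V_{g,\vartheta}$; $f(\overline{\Q}) \subseteq \overline{\Q}$ holds because $f(\alpha_n)$ is algebraic by construction; and $f^{-1}(\overline{\Q}) \subseteq \overline{\Q}$ follows from the root-control step, since for any $w$ with $f(w) = \beta$ algebraic one chooses $N$ large enough that $|w| \leq R_N$ and $\beta \in \{\alpha_1,\ldots,\alpha_N\}$ and concludes that $w$ is one of the prescribed algebraic preimages. The orbit count is exactly right: the preselected tuples give exactly $s_k$ $k$-cycles, and the fresh-image rule forces the forward orbit of every $\alpha \in A$ to be an infinite strictly wandering sequence in $A$, so no new cycles arise. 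The main obstacle is the joint compatibility of the three steps of the inductive stage: the interpolation values at the $\alpha_n$ are pinned down by the desired orbit structure, yet the resulting high-degree polynomial must still be small on the growing discs $|z| \leq R_N$ so that Rouch\'e estimates from earlier stages survive. Calibrating the growth of $d_N$, $R_N$, and the permitted perturbation sizes, while simultaneously honoring the finitely many fresh cycle constraints on $C$ at every stage, is the technical core of the argument and the essential new ingredient beyond \cite{marques-moreira1}.
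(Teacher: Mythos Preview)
Your proposal has a genuine gap in how the periodic orbits are produced. You preselect the cycle tuples $(\beta_{k,i,1},\ldots,\beta_{k,i,k})$ before the construction begins and then plan to force $f(\beta_{k,i,j})=\beta_{k,i,(j\bmod k)+1}$ by interpolation. But the constraint $f\in V_{g,\vartheta}$ bounds the coefficientwise distance $|b_n-a_n|<\theta_n$, and this in turn bounds the \emph{pointwise} distance on compacta: for $|\beta|\le R$ one has $|f(\beta)-g(\beta)|\le\sum_n\theta_n R^n$, a quantity fixed once $g$ and $\vartheta$ are given. If a preselected $\beta_{k,i,j}$ lies in a disc where this sum is small while $\beta_{k,i,j+1}$ is far from $g(\beta_{k,i,j})$, no admissible $f$ can send one to the other. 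Your claim that the interpolation correction can be ``spread across many high-degree coefficients, each individually small'' fails precisely at bounded interpolation nodes: a term $c_nz^n$ with $n$ large and $|c_n|$ small contributes negligibly at $|z|\le 1$, so enlarging $d_N$ gives you no additional leverage there. The same obstruction recurs more severely at later stages, where the correction must also vanish at all previously nailed points and be small on the earlier Rouch\'e discs.

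The paper avoids this by \emph{not} preselecting cycles. At each stage it works with the current transcendental approximation $f_{n,0}$, invokes Bergweiler's theorem (Lemma~\ref{l1}) to obtain infinitely many repelling $t$-periodic orbits of $f_{n,0}$ for every $t\ge 2$, and uses a Picard-type argument (Lemma~\ref{l5}) to produce many fixed points with derivative $\notin\{0,1\}$. It then selects the needed orbits inside a large ball $B_{n+1}$ and makes a tiny perturbation that replaces each chosen orbit by a nearby \emph{algebraic} one, which is thereafter ``nailed'' (subsequent perturbation polynomials vanish to order two on it). Because each algebraic orbit is manufactured adjacent to an already existing periodic orbit of the current function, the required correction is genuinely small and the $\theta_n$-bounds survive. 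This dynamical input---locating the cycles where the function already nearly has them, rather than dictating their positions in advance---is the essential idea missing from your outline; without it the interpolation step of your stage $N$ cannot in general be carried out for the points of $C$.
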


Taking $K=\C$ in the theorem, and choosing an arbitrary sequence $\sigma$, we obtain the following new result illustrating the ubiquity of Mahler functions. 

\begin{corollary} The set of all Mahler functions is dense in $\E$. 
\end{corollary}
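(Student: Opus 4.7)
The plan is to obtain the corollary as an essentially immediate consequence of Theorem \ref{main_thm}. I would take $K = \C$ in the theorem, noting that the required density hypothesis is trivially satisfied since $K \cap \QQ = \QQ$ is dense in $\C$. Any choice of sequence $\sigma \in (\Z_{\ge 0} \cup \{\infty\})^{\N}$ then gives a set $\M(\sigma, \C)$, every element of which is by construction a Mahler function. It is enough to exhibit one such $\sigma$ for which $\M(\sigma, \C)$ is nonempty in order for the density in $\T$ promised by the theorem to hand us what we want.

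The remaining step is a one-line topological transitivity argument: since $\M(\sigma, \C) \subseteq \T$ and the theorem guarantees that $\M(\sigma, \C)$ is dense in $\T$, the closure of $\M(\sigma, \C)$ in $\E$ contains $\T$. The excerpt explicitly notes that $\T$ is open and dense in $\E$, so taking closures once more yields that $\M(\sigma, \C)$ is dense in $\E$. Since every element of $\M(\sigma, \C)$ is a Mahler function, the set of all Mahler functions is a fortiori dense in $\E$.

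There is no real obstacle here: all the analytic and number-theoretic work—the perturbation construction producing Mahler functions with prescribed periodic orbits and coefficients lying in an arbitrary dense subfield—has been absorbed into Theorem \ref{main_thm}. The corollary is simply the observation that once one is free to choose $K = \C$, the coefficient constraints disappear entirely and the statement collapses to a topological reformulation of density in $\T$.
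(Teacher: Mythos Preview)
Your proposal is correct and follows essentially the same route as the paper: take $K=\C$ (so the density hypothesis on $K\cap\QQ$ is trivial), choose any sequence $\sigma$, and invoke Theorem~\ref{main_thm} to get that $\M(\sigma,\C)$, hence the set of all Mahler functions, is dense in $\T$ and therefore in $\E$. Your remark about needing $\M(\sigma,\C)$ to be nonempty is superfluous, since density in $\T$ already gives this, but it does no harm.
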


The proof of our main theorem relies primarily on a theorem of Bergweiler \cites{bergweiler1,bergweiler2} concerning periodic orbits of transcendental entire functions, and builds on techniques developed by the second and third authors in \cites{marques-moreira1,marques-moreira2}.

\subsection*{Acknowledgements} We would like to thank Moubariz Garaev, Harald Helfgott and Lola Thompson for organizing and inviting the three first authors to the meeting Number Theory in the Americas/Teoría de N\'umeros en Am\'erica held in Casa Matem\'atica Oaxaca, in 2019, where this work started.

\section{Proof of Theorem \ref{main_thm}}\label{proof}

\subsection{Auxiliary results}

Throughout this section we use the familiar notation $[a, b] = \{a, a + 1,\ldots, b\}$ for integers $a < b$.

We begin our discussion of the proof of Theorem \ref{main_thm} by stating several preliminary results that will be used in the proof. The first result concerns holomorphic dynamics, and is due to Bergweiler \cite{bergweiler1}. 

Recall that a point $\a\in \p(k, f)$ is called {\it repelling} if $|(f^k)'(\a)|>1$.

\begin{lemma}[Bergweiler]\label{l1}
Every transcendental entire function has infinitely many repelling $k$-periodic points, for all $k\geq 2$.
\end{lemma}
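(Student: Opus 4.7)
Fix $k\geq 2$. I would proceed in three stages: (i) produce infinitely many fixed points of $F:=f^k$; (ii) show that all but finitely many of these have exact period $k$; (iii) show that infinitely many are repelling.

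For stage (i), note that $F=f^k$ is again a transcendental entire function, and indeed by P\'olya's theorem its order grows very rapidly in $k$. A classical theorem of Rosenbloom and Baker asserts that $F(z)=z$ has infinitely many solutions in $\C$; the standard argument applies Nevanlinna's second fundamental theorem to $F(z)-z$ to show that the counting function of its zeros grows like the characteristic $T(r,F)$, ruling out the pathology that $z$ could be Picard-exceptional for $F$. For stage (ii), any fixed point of $f^k$ has exact period dividing $k$, so one must sieve out the periodic points of each exact period $d$ where $d$ is a proper divisor of $k$. The key input is a growth comparison: $T(r,f^k)$ dominates $T(r,f^d)$ as $r\to\infty$ by a wide margin, because iterating a transcendental entire function causes an explosive increase in order. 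The fixed points of $f^d$ therefore account for only a negligible fraction of those of $f^k$, and infinitely many of the latter must have exact period $k$.

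Stage (iii) is the most delicate, and I expect it to be the main obstacle. The non-repelling cycles are attracting, parabolic, or neutral (Siegel or Cremer). By the Fatou--Sullivan theory, every attracting or parabolic cycle attracts a singular value of $f$, but since a generic transcendental entire function can have infinitely many singular values, this fact alone does not bound the non-repelling cycles. To bypass this difficulty, I would invoke Baker's theorem that the Julia set $J(f)$ equals the closure of the repelling periodic points of $f$, and then combine it with a Montel-type normal-families argument to locate open neighborhoods in $J(f^k)$ in which periodic points of exact period $k$ accumulate and in which $|(f^k)'|$ can be forced to exceed $1$. This localization step is precisely the technical core of Bergweiler's argument, and it is at this point that I would rely on the machinery developed in his paper for the full detail.
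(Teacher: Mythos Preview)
The paper does not give a proof of this lemma at all: it is stated as Bergweiler's theorem and cited to \cite{bergweiler1}, with no argument supplied. So there is no in-paper proof to compare against; what you are really sketching is Bergweiler's own result.

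Your stages (i) and (ii) follow the classical Rosenbloom--Baker line and are broadly correct as an outline, though the growth-comparison sieve you describe is essentially Baker's 1960 argument, which by itself left open the possibility of one exceptional period; eliminating that exception for every $k\ge 2$ is exactly what Bergweiler's paper accomplishes. More importantly, your three-stage decomposition (produce fixed points of $f^k$, sieve to exact period $k$, then argue repelling) is \emph{not} the architecture of Bergweiler's proof. He does not first manufacture periodic points and afterwards show that most are repelling; instead he uses Ahlfors' theory of covering surfaces (a quantitative islands-type argument) to construct, directly and in one stroke, repelling periodic points of the required exact period.

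Your stage (iii), which you rightly flag as the crux, has a genuine gap as stated. Invoking Baker's theorem that repelling periodic points are dense in $J(f)$ together with a Montel argument yields repelling periodic points of \emph{some} periods accumulating in $J(f)$, but gives no control over which exact periods occur. And the natural fallback---``only finitely many cycles of period $k$ can be non-repelling''---is unavailable here: a transcendental entire function can have infinitely many singular values, so the Fatou--Shishikura-type finiteness bounds for non-repelling cycles do not apply. You acknowledge that you would ultimately defer to Bergweiler's machinery at this point, which is honest, but then the proposal is really a reduction to the cited result rather than an independent proof, and the route you take to that reduction diverges from how the cited proof actually runs.
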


In order to construct prescribed $1$-periodic points (which are not covered by the previous result -- for example, $e^z+z\in \T$ does not have fixed points), we shall need the classical Picard theorems.

\begin{lemma}[Great Picard Theorem]\label{l3}
Suppose that a holomorphic function $f$ (whose domain contains a punctured disk centered at $w$) has an essential singularity at $\omega$. Then, on any punctured neighborhood of $\omega$, $f$ assumes all possible complex values, with at most a single exception, infinitely often.
\end{lemma}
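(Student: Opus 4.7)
The Great Picard Theorem is classical, so I would sketch the standard proof via normal families. The first step is to reduce the ``infinitely often'' formulation to an omitted-values statement: if $f$ attained each of two distinct complex values $a, b$ only finitely many times in some fixed punctured neighborhood of $\omega$, then shrinking that neighborhood would yield a smaller punctured disk on which $f$ avoids both $a$ and $b$. Hence it suffices to show that $f$ cannot omit two values on any punctured neighborhood of $\omega$.

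Assuming such a punctured disk exists, I would translate so that $\omega = 0$ and compose with a M\"obius transformation in the target to reduce to the case where the two omitted values are $0$ and $1$. The decisive step is to introduce the rescalings $f_n(z) = f(z / 2^n)$ on the fixed annulus $A = \{z : 1/2 < |z| < 2\}$, defined for all sufficiently large $n$ since the original punctured disk contains $\{0 < |z| < r\}$ for some $r > 0$. Each $f_n$ still omits $0$ and $1$ on $A$, so Montel's fundamental normality theorem guarantees that $\{f_n\}$ is a normal family on $A$. Consequently some subsequence $f_{n_k}$ either converges locally uniformly on $A$ to a holomorphic function, or diverges locally uniformly to $\infty$.

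In the first case, the values $|f_{n_k}(z)|$ are uniformly bounded, say by $M$, on the circle $|z| = 1$, so $|f|$ is uniformly bounded by $M$ on every circle $|z| = 2^{-n_k}$. The maximum modulus principle applied to the annular regions between successive such circles then shows that $|f|$ is bounded on a punctured neighborhood of $0$, and Riemann's removable singularity theorem forces $0$ to be a removable singularity. In the second case, the same argument applied to $1/f$, which is holomorphic in a punctured neighborhood of $0$ since $f$ omits $0$, shows that $0$ is a pole of $f$. Either conclusion contradicts the hypothesis that $\omega = 0$ is an essential singularity.

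The only nontrivial ingredient in this argument is Montel's fundamental normality test, and that is the main obstacle in the sense that every known proof ultimately rests on the existence of the modular $\lambda$-function (or the equivalent Schottky's theorem); the remaining steps are routine maximum-principle manipulations. Since the result is classical, the paper presumably just cites it from a standard textbook rather than reproducing the argument.
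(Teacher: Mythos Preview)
Your guess at the end is exactly right: the paper does not prove this lemma at all. Lemma~\ref{l3} is stated without proof as the classical Great Picard Theorem, and is then used (via Remark~\ref{R1}) as a black box in the proof of Lemma~\ref{l5}. There is nothing to compare your argument against.

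That said, your sketch is a correct outline of one of the standard proofs. The reduction from ``infinitely often'' to ``omits two values'' is right, the rescaling $f_n(z)=f(z/2^n)$ on a fixed annulus is the usual device, and the dichotomy coming from Montel's fundamental normality test (convergence to a holomorphic limit versus divergence to $\infty$) together with the maximum principle on the nested circles does force a removable singularity or a pole, contradicting essentiality. Your remark that Montel's theorem is the only deep input, and that it in turn rests on the modular function or Schottky's inequality, is also accurate.
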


\begin{remark}\label{R1}
Note that every function $g\in \T$ has an essential singularity at infinity. Moreover, if $P(z)$ is any nonzero polynomial, then $g(z)/P(z)$ has an essential singularity at infinity (equivalently, $g(1/z)/P(1/z)$ has an essential singularity at $0$). The previous result implies that $g(z)/P(z)$ assumes all possible complex values, with at most a single exception, infinitely often.
\end{remark}

\begin{lemma}\label{l4}
Let $g(z)$ be an entire function and set $f(z)=g(z)+\e P(z)$, where $\e>0$ is a real number and $P(z)\in \C[z]$ is a nonzero polynomial. Then, for all $k\geq 1$, we have that $f^k(z)=g^k(z)+\e \phi_k(\e, z)$, where $\phi_{k}(\e,z)$ is a nonzero analytic function in both variables $\e\in \C\backslash \{0\}$ and $z\in \C$.
\end{lemma}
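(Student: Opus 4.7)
The plan is to argue by induction on $k$. For the base case $k=1$, take $\phi_1(\e,z) = P(z)$; this is entire in both variables and nonzero by the hypothesis that $P$ is a nonzero polynomial.

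For the inductive step, suppose $f^k(z) = g^k(z) + \e\phi_k(\e,z)$ with $\phi_k$ analytic and not identically zero. Since $f^{k+1}(z) = g(f^k(z)) + \e P(f^k(z))$, I expand $g(f^k(z))$ around $g^k(z)$ using the Taylor-type identity $g(w+h) - g(w) = h\cdot Q_g(w,h)$, where $Q_g(w,h) := \int_0^1 g'(w+sh)\,ds$ is entire on $\C^2$. Applied with $w = g^k(z)$ and $h = \e\phi_k(\e,z)$, this gives $g(f^k(z)) = g^{k+1}(z) + \e\phi_k(\e,z)\cdot Q_g(g^k(z),\e\phi_k(\e,z))$, and the analogous expansion for $P$, using $Q_P(w,h) := \int_0^1 P'(w+sh)\,ds$, handles the other term. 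Assembling the pieces and factoring out one power of $\e$ yields
\[
\phi_{k+1}(\e,z) = \phi_k\cdot Q_g(g^k,\e\phi_k) + P(g^k) + \e\phi_k\cdot Q_P(g^k,\e\phi_k),
\]
which is manifestly analytic in both variables.

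The hard part is verifying that $\phi_{k+1}$ is not identically zero. A naive evaluation at $\e = 0$ gives $\phi_{k+1}(0,z) = \phi_k(0,z)g'(g^k(z)) + P(g^k(z))$, which can vanish even when $\phi_k\not\equiv 0$ because the first-order $\e$-coefficient of $\phi_k$ may itself be zero (for example when $g(z) = -z$ and $P$ is even). My plan is to argue by contradiction: if $\phi_{k+1}\equiv 0$ then $f^{k+1} \equiv g^{k+1}$ on $\C^2$, and comparing Taylor coefficients in $\e$ for large $N$ produces a nonzero contribution from a term of the form $g^{(N)}(g^k(z))\cdot(\text{nonvanishing expression in } P \text{ and } \phi_k)/N!$. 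Since $g$ is transcendental entire, every derivative $g^{(N)}$ is a nonzero entire function, and the Great Picard Theorem (Lemma \ref{l3}, cf.\ Remark \ref{R1}) ensures the image of the iterate $g^k$ is dense in $\C$, so $g^{(N)}\circ g^k\not\equiv 0$; the polynomial correction $\e P(f^k)$ involves only finitely many derivatives of $P$ and cannot cancel such a term. Transcendentality of $g$ enters essentially: without it the conclusion can fail (for instance $g(z) = a - z$ with $P$ a nonzero constant and $k = 2$ give $\phi_2\equiv 0$).
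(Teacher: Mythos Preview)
Your inductive construction and the use of the integral identity
\[
g(y+h)-g(y)=h\int_0^1 g'(y+th)\,dt
\]
are exactly the paper's approach; the paper's formula for $\phi_{k+1}$ is literally yours with the $P$-term left unexpanded (your $P(g^k)+\e\phi_k\,Q_P(g^k,\e\phi_k)$ is just $P(g^k+\e\phi_k)$). On the part the paper actually proves, your argument matches it verbatim.

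Where you go beyond the paper is on the ``nonzero'' claim. The paper does not verify this at all --- it simply stops at the recursive formula. You are right to flag it, and your example $g(z)=a-z$, $P$ a nonzero constant, $k=2$ indeed gives $\phi_2\equiv 0$, so the lemma as stated (for an arbitrary entire $g$) is not literally correct on this point.

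Your proposed repair, however, has two problems. First, it imports the hypothesis that $g$ is transcendental, which is not part of the lemma's statement; you are proving a different (stronger-hypothesis) result. Second, the Taylor-in-$\e$ sketch is not a proof: because $\phi_k$ itself depends on $\e$, the coefficient of $\e^N$ in $g(g^k(z)+\e\phi_k(\e,z))$ receives contributions from \emph{every} $g^{(j)}(g^k(z))$ with $0\le j\le N$ (paired with the appropriate $\e$-coefficients of $\phi_k^j$), not just from $g^{(N)}(g^k(z))$. Likewise the $P$-term contributes at every order in $\e$, not only through $P^{(j)}$ for $j\le\deg P$. So the assertion that the $g^{(N)}\circ g^k$ piece ``cannot be cancelled'' is unjustified as written. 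If you want to salvage the nonvanishing under the extra hypothesis that $g$ is transcendental, a cleaner line is to fix $z_0$ with $P(z_0)\neq 0$ and exploit that $\e\mapsto f(z_0)=g(z_0)+\e P(z_0)$ parametrizes all of $\C$, rather than chasing high-order coefficients.
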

\begin{proof}
The proof is by induction on $k$. The base step $k=1$ follows immediately by choosing $\phi_{1}(\e,z)=P(z)$. Suppose that the result is valid for $k$; then 
\begin{eqnarray*}
f^{k+1}(z) & = & f(f^k(z))=f(g^k(z)+\e \phi_{k}(\e,z))\\
 & = & g(g^k(z)+\e \phi_{k}(\e, z))+\e P(g^k(z)+\e \phi_{k}(\e,z)).
\end{eqnarray*}
Now use the identity 
$$g(y+h)-g(y)=h\int_0^1 g'(y+th)dt,$$
which follows from the fact that the derivative of the function $u(t)=g(y+th)$ is $h\cdot g'(y+th)$, to write 
$$g(g^k(z)+\e \phi_{k}(\e, z))=g^{k+1}(z)+\e \phi_{k}(\e, z)\int_0^1 g'(g^k(z)+t\e \phi_{k}(\e, z))dt,$$
and the result follows by setting 
$$\phi_{k+1}(\e, z)=\phi_{k}(\e, z)\int_0^1 g'(g^k(z)+t\e \phi_{k}(\e, z))dt+P(g^k(z)+\e \phi_{k}(\e,z)).$$
\end{proof}

\begin{lemma}\label{l5}
Let $g(z)$ be a transcendental entire function, $P(z)\in \C[z]$ and $(k,M)\in \Z_{\geq 1}^2$. Then, for every $\delta>0$, there are $\delta_1, \delta_2$ with $0<\delta_1<\delta_2<\delta$ and $R\in \R_{>M}$ such that, for all $\e\in (\delta_1,\delta_2)$, the function $f(z):=g(z)+\e P(z)$ has at least $M$ fixed points $w_1,w_2,\ldots,w_M$ with $f'(w_j)\not\in \{0,1\}$ for $j\le M$ and $M$ expansive $t$-cycles entirely contained in $B(0,R)$ for all $t\in [1,k]$.
\end{lemma}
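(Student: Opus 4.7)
The plan is to produce the required fixed points and repelling cycles at a single, carefully chosen parameter $\e_0 \in (0,\delta)$, and then propagate both structures to a small neighborhood of $\e_0$ via the implicit function theorem. The fixed points satisfying the derivative condition will come from the Great Picard Theorem (Lemma \ref{l3}) applied to the meromorphic function $h(w):=(w-g(w))/P(w)$, while the repelling $t$-cycles for $t\in [2,k]$ will come from Bergweiler's Lemma \ref{l1} applied to the transcendental entire function $f_{\e_0}:=g+\e_0 P$.

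For the fixed points, by Remark \ref{R1} the function $h$ has an essential singularity at $\infty$, and $h(w)=\e$ is equivalent to $f_\e(w)=w$. By the Great Picard Theorem, $h^{-1}(\e)$ is infinite for every $\e \in \C$ except at most one exceptional value. At a preimage $w$ with $h'(w)\ne 0$, a direct calculation using the fixed-point relation gives $f_\e'(w)=1-h'(w)P(w)\ne 1$, so the critical values of $h$ (a countable set) are the only $\e$'s for which some fixed point could have derivative $1$. The set of $\e$'s for which some fixed point has $f_\e'(w)=0$ is similarly countable: when $P$ is nonconstant, such an $\e$ must equal both $h(w)$ and $-g'(w)/P'(w)$ at the same $w$, and one verifies that these two meromorphic functions cannot be identically equal, since integrating $(w-g)/P\equiv -g'/P'$ would force $g$ to be a polynomial, contradicting the transcendence of $g$ (the case $P$ constant is handled directly, as $f_\e'(w)=g'(w)$ is then independent of $\e$). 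Avoiding the countable union of these bad sets, I pick $\e_0 \in (0,\delta)$ for which $f_{\e_0}$ has infinitely many fixed points $w$ with $f_{\e_0}'(w)\notin \{0,1\}$, and select any $M$ of them, $w_1,\ldots,w_M$.

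For the repelling cycles, Bergweiler's lemma applied to the transcendental entire function $f_{\e_0}$ gives, for each $t\in [2,k]$, $M$ repelling $t$-cycles; together with the $w_j$, all chosen periodic points can be placed inside some ball $B(0,R_0)$. Propagation is by the implicit function theorem applied to $f_\e(z)-z=0$ and $f_\e^t(z)-z=0$ at the points $(\e_0,w_j)$ and $(\e_0,\alpha)$: the required nonvanishing of the $z$-derivative is $f_{\e_0}'(w_j)-1 \ne 0$ and $(f_{\e_0}^t)'(\alpha)-1\ne 0$ (the latter because $|(f_{\e_0}^t)'(\alpha)|>1$), and the joint analyticity in $(\e,z)$ comes from Lemma \ref{l4}. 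Continuity of the resulting branches $w_j(\e)$ and $\alpha(\e)$ ensures that on a small neighborhood of $\e_0$ the conditions $f_\e'(w_j(\e))\notin\{0,1\}$, $|(f_\e^t)'(\alpha(\e))|>1$, and containment in $B(0,R_0+1)$ all persist. Intersecting the finitely many resulting neighborhoods with $(0,\delta)$ yields the required open interval $(\delta_1,\delta_2)$, and one can take $R:=R_0+M+1$.

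The main obstacle I foresee is the bad-parameter analysis for the fixed-point step --- specifically, verifying that the set of $\e$'s for which some fixed point of $f_\e$ has vanishing derivative is countable, equivalently that $h\not\equiv -g'/P'$. This is the one place where the transcendence of $g$ enters essentially; the rest of the argument is a relatively standard implicit-function-theorem perturbation built on the two dynamical tools supplied earlier in the section, namely Bergweiler (for $t\ge 2$) and Great Picard together with the $h$-calculus (for $t=1$).
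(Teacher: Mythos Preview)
Your argument is correct and follows essentially the same route as the paper: fixed points via Great Picard applied to $h=(z-g)/P$, a countability argument to discard bad parameters (the paper eliminates $\e$ to obtain the equation $g'P-gP'=cP-zP'$ and shows the left side is transcendental, while you phrase the case $c=1$ via critical values of $h$ and the case $c=0$ via $h\not\equiv -g'/P'$, which unwinds to the same identity), Bergweiler for periods $t\ge 2$, and then persistence by the implicit function theorem. The only place you are terser than the paper is the justification that $h\not\equiv -g'/P'$; there the paper shows $g'P-gP'=P^2(g/P)'$ is transcendental by a growth estimate on $g/P$ along rays, which is the precise content behind your remark that ``integrating would force $g$ to be a polynomial.''
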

\begin{proof}
We claim, first of all, that $g'(z)P(z)-g(z)P'(z)$ is a transcendental entire function. Indeed, this function is equal to $P(z)^2\cdot(g(z)/P(z))'$, so, if it is a polynomial, then $u(z):=(g(z)/P(z))'$ is a rational function; however, in that case, since $g(z)/P(z)$ is holomorphic in a neighborhood of the set $\{z\in\C: |z|\ge T\}$ for some $T>0$, $|g(z)/P(z)|$ is bounded in $\{|z|=T\}$, so there is $m\geq 1$ such that $|u(z)/z^m|$ is bounded for $|z|\ge T$ and, for $|z|>T$, \[g(z)/P(z)=g(Tz/|z|)/P(Tz/|z|)+\int_{T}^{|z|}u(sz/|z|)\cdot\frac{z}{|z|}ds.\] It follows that $|\frac{g(z)}{P(z)z^{m+1}}|$ is bounded for $|z|\ge T$, but since $P(z)$ is a polynomial, this implies that $g(z)$ is a polynomial, which is a contradiction.

Next, by Remark \ref{R1}, the meromorphic function $h(z):=(z-g(z))/P(z)$ has an essential singularity at infinity. We deduce from Lemma \ref{l3} that there exists $\hat\delta\in (0,\delta)$ such that $h^{-1}(\e)$ is an infinite set for all $\e\in (0,\hat\delta)$. Observe that any element of $h^{-1}(\e)$ is a fixed point of $f(z)=g(z)+\e P(z)$. Now, given a constant $c$, the set of $\e\in\C$ such that, for some $z$ with $P(z)\ne 0$ we have $g(z)+\e P(z)=z$ and $g'(z)+\e P'(z)=c$, is countable. Indeed, these identities imply $g(z)P'(z)+\e P(z)P'(z)=zP'(z)$ and $g'(z)P(z)+\e P(z)P'(z)=cP(z)$, and therefore $$g'(z)P(z)-g(z)P'(z)=cP(z)-zP'(z).$$
The function on the left-hand side of the above identity is transcendental (by the claim proved earlier), while the function on the right-hand side is a polynomial. Hence, the set of solutions of this identity has no accumulation points, and is thus countable, and since $\e=(z-g(z))/P(z)$, the set of corresponding values of $\e$ is also countable. Given that the set of zeros of $P(z)$ is finite, this implies that there is $\tilde\delta\in (0,\hat\delta)$ such that, defining $\tilde f(z)=g(z)+\tilde\delta P(z)$, we have that $\tilde f(z)$ has infinitely many fixed points $w_j$, $j\ge 1$, with $\tilde f'(w_j)\not\in \{0,1\}$ for all $j$.

Bergweiler's theorem \ref{l1} implies that the transcendental entire function $\tilde f(z)=g(z)+\tilde\delta P(z)$ has infinitely many expansive $t$-cycles for every $t$ with $t\in [2, k]$. We can choose $R>0$ such that $\tilde f$ has at least $M$ fixed points as before, which are not zeros of $P(z)$, and $M$ expansive $t$-cycles contained in $B(0,R)$ for every $t\in [2, k]$. Since those fixed points and $t$-cycles persist for small perturbations of $\tilde f$, there is $\eta>0$ such that $(\tilde\delta-\eta,\tilde\delta+\eta)\subset (0,\delta)$ and, for all $\e\in (\delta_1,\delta_2)=(\tilde\delta-\eta,\tilde\delta+\eta)$, the function $f(z):=g(z)+\e P(z)$ has at least $M$ fixed points $w_1,w_2,\ldots,w_M$ with $f'(w_j)\not\in \{0,1\}$ for $j\le M$ and $M$ expansive $t$-cycles entirely contained in $B(0,R)$ for all $t\in [1,k]$.
\end{proof}

\begin{lemma}\label{l6}
Let $R\in (0,+\infty)$, $g(z)$ be an entire function and $P(z)\in \C[z]$ a nonzero polynomial. Suppose that $\a$ is a complex number such that $g(z)\neq \a$ for all $z\in \partial B(0,R)$.Then there exists a positive real number $\delta$ such that, for all $\e\in (0,\delta)$, the function $f(z):=g(z)+\e P(z)$ satisfies
\[
\#(f^{-1}(\a)\cap B(0,R))=\#(g^{-1}(\a)\cap B(0,R)).
\]
\end{lemma}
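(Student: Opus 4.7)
The plan is to apply Rouché's theorem on the circle $\partial B(0,R)$. Since $g(z)-\alpha$ is continuous and nonvanishing on the compact set $\partial B(0,R)$, I would first set
\[m := \min_{|z|=R}|g(z)-\alpha|,\]
which is strictly positive. Since $P$ is a polynomial, and hence bounded on the boundary circle, I would then set
\[M := \max_{|z|=R}|P(z)|,\]
which is finite and strictly positive (the nonzero polynomial $P$ cannot vanish identically on a circle).

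Taking $\delta := m/M$, for every $\e\in(0,\delta)$ and every $z\in\partial B(0,R)$ one has
\[|\e P(z)| \le \e M < m \le |g(z)-\alpha|,\]
so Rouché's theorem applies to the pair $g(z)-\alpha$ and $(g(z)-\alpha)+\e P(z) = f(z)-\alpha$ on the disk $B(0,R)$, yielding equality of the number of zeros inside $B(0,R)$ counted with multiplicity. Equivalently, one can observe that the argument-principle integral
\[N(\e) := \frac{1}{2\pi i}\int_{\partial B(0,R)} \frac{g'(z)+\e P'(z)}{(g(z)-\alpha)+\e P(z)}\, dz\]
is integer-valued and depends continuously on $\e\in[0,\delta)$ (the integrand converges uniformly on $\partial B(0,R)$ as $\e\to 0$, since its denominator stays bounded away from zero by $m-\e M$). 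A continuous integer-valued function on an interval is constant, so $N(\e)=N(0)$.

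The only delicate point is the interpretation of $\#$: the argument above supplies equality of zero counts \emph{with multiplicity}. This is the natural reading in view of Lemma \ref{l5}, where the constructed fixed points satisfy $f'(w_j)\neq 1$ and the higher-period cycles are repelling, so each zero of $g-\alpha$ in $B(0,R)$ that arises in the later application is simple; for small enough $\e$, the zeros of $f-\alpha$ in $B(0,R)$ then also remain simple by continuous dependence of simple roots, and set cardinality coincides with the multiplicity-weighted count on both sides. I therefore expect the Rouché / argument-principle step to be essentially the whole proof, with the multiplicity-versus-cardinality bookkeeping being the only place that requires a remark rather than serious work.
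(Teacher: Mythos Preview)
Your proposal is correct and essentially identical to the paper's proof: the paper defines $\delta=\dfrac{\min_{z\in\partial B}|g(z)-\alpha|}{\max_{z\in\partial B}|P(z)|}$ and applies Rouch\'e's theorem exactly as you do, with the count understood with multiplicity. Your added argument-principle continuity remark and multiplicity discussion are correct but go beyond what the paper includes.
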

\begin{proof}
The hypothesis implies that $\min_{z\in \partial B(0,R)}|g(z)-\a|>0$. Since also $\max_{z\in \partial B}|P(z)|>0$, we may define $\delta>0$ by
\[
\delta:=\dfrac{\min_{z\in \partial B}|g(z)-\a|}{\max_{z\in \partial B}|P(z)|}.
\]
Clearly, for any $\e\in (0,\delta)$, we have $|\e P(z)|<|g(z)-\a|$ for all $z\in \partial B(0,R)$. By Rouch\' e's theorem, the functions $g(z)-\a+\e P(z)$ and $g(z)-\a$ have the same number of zeros in $B(0,R)$. Setting $f(z)=g(z)+\e P(z)$, we conclude that $\#(f^{-1}(\a)\cap B(0,R))=\#(g^{-1}(\a)\cap B(0,R))$, as desired.
\end{proof}

Having proved all the necessary auxiliary results, we now proceed to the proof of Theorem \ref{main_thm}.

\subsection{The proof}

The proof of Theorem \ref{main_thm} adapts the perturbation techniques from \cite{marques-moreira2}, with the additional ingredient of constructing the required number of algebraic periodic orbits (for which we use the above mentioned results by Bergweiler), and avoiding the creation of undesirable additional algebraic periodic orbits. These constructions are done by an inductive process, which depends on an enumeration of the algebraic complex numbers and on a suitable growing sequence of balls.

Let $g(z)=\sum_{k\geq 0}b_kz^k\in\T$ and let $(\theta_k)_{k\ge 0}$ be a sequence of positive real numbers. We shall prove the existence of an entire function $\phi$ such that $f:=g+\phi$ is a Mahler function defined over $K$, and that moreover, $\#\o(k,f)=s_k$ and $|a_k-b_k|<\theta_k$ for all $k\geq 0$, where $f(z)=\sum_{k\geq 0}a_kz^k$. We may assume $\theta_k<1/k!, \forall k\ge 0$, so $f(z)$ will automatically be an entire function.
 
Several functions appearing in the process of constructing $f$ will not be Mahler functions, so we make the following general definition. For an arbitrary function $f:\C\to \C$, we say that a $k$-periodic orbit is {\it algebraic} if every term in the cycle is an algebraic number.

Let $\{\a_1,\a_2,\a_3,\ldots\}$ be an enumeration of $\QQ$ with $\a_1=0$. In what follows, we denote by $L(P)$ the \textit{length} of a polynomial $P$, i.e., the sum of the absolute values of the coefficients of $P$.

We construct our desired function inductively. Choose $\e_{0}\in B(0,\theta_0)$and $\e_{1}\in B(0,\theta_1)$ such that $b_0+\e_0\in (K\backslash \{0\})\cap \overline \Q$ and $b_1+\e_1\neq 0$, and set $f_1(z):=g(z)+\e_0+\e_1 z$. Choose also a sequence $(r_m)_{m\ge 1}$ of real numbers with $r_m>\max\{m-1,r_{m-1}\}, \forall m\ge 1$ and $r_1>0$ small such that, for any $z\in \overline{B(0,r_1)}$, $f(z)\neq 0$ and $f(z)\neq z$. We wish to recursively construct a sequence of entire functions $f_2(z), f_3(z), \ldots$ of the form
\[
f_{m+1}(z)=f_m(z)+z^{m+2}h_m(z)P_{m}(z),
\]
with the following properties being satisfied:
\begin{itemize}
\item[(i)] $h_m, P_m\in \C[z]$ and $f_m(z)=g(z)+\sum_{i=0}^{t_m}a_iz^i$ with $t_m\ge m$;
\item[(ii)] Letting $X_m=\{\a_2,\a_3\ldots, \a_{m}\}$, \[\tilde X_m=\{\tau\in f^{-1}_m(\{\a_1,\a_2,\ldots, \a_{m}\})\cap B(0,r_{m})\,|\,\tau\ne 0, \tau\in \QQ, f_m'(\tau)\neq 0\},\] and
\[
Y_m=\bigcup_{k=1}^{m}Y_m^{(k)}
\] 
where, for $k\in [1, m]$, $Y_m^{(k)}$ is a union of $\min\{m,s_k\}$ repelling algebraic periodic orbits of period $k$  of $f_m$ contained in $B(0,r_{m})$, we have $P_m(z)=\prod_{\tau \in X_m \cup \tilde X_m\cup Y_m}(x-\tau)^2$. Moreover, $P_m(z)\mid P_{m+1}(z)$ for all $ m\ge 1$;
\item[(iii)]$f_m(\tau)\in \QQ, f_m'(\tau)\ne 0\ \forall \tau\in X_m$ and, for each integer $j\in [1, m]$, there is $\tau\in \tilde X_m$ such that $f_m(\tau)=\a_j$;
\item[(iv)] $0<L(h_m P_m)<\nu_m:=\frac{1}{m^{m+2+\deg (h_m P_m)}}$; 
\item[(v)] $a_k+b_k\in K$ and $|a_k|<\theta_k$, for $k\in [1, m]$;
\item[(vi)] $f^{-1}_{m+1}(\{\a_1,\a_2,\ldots, \a_{m}\})\cap B(0,r_m)=f^{-1}_m(\{\a_1,\a_2,\ldots, \a_{m}\})\cap B(0,r_m)=f^{-1}_m(\{\a_1,\a_2,\ldots, \a_{m}\})\cap \overline{B(0,r_m)}=\tilde X_m\subseteq \QQ$;
\item[(vii)] If $P_{m-1}(\a_m)\neq 0$ then $P_m(f_m(\a_m))\ne 0$; if $\tau\in \tilde X_m$ and $P_{m-1}(\tau)\ne 0$ then $\tau\not\in\{f_m(w): w\text{ is a root of } P_m(z)\}$.
\end{itemize}

The polynomials $h_n$ have the form $\sum_{j=0}^{s_n+\ell_n}\e_{n,j}z^{1-\hat{\d}_{j,0}}\tilde P_{n,j}(z)$, where
$s_n$ and $\ell_n$ are natural numbers to be chosen later. Here, $\hat{\d}_{j,0}$ is $1$ if $j=0$ and $0$ otherwise, and $\tilde P_{n,j}$ are monic polynomials with complex coefficients.

The required function will have the form $f(z)=\lim_{m\to \infty}f_m(z)$, where, for each $m\ge 1$, $f_{m+1}(z)=g(z)+\sum_{1\le n\le m}\sum_{j=0}^{s_n+\ell_n}\e_{n,j}z^{n+2-\hat{\d}_{j,0}}P_{n,j}(z)$, with $P_{n,j}(z)=P_n(z)\tilde P_{n,j}(z)$. Thus
\[
f(z)=g(z)+\e_0+\sum_{n\geq 2}\sum_{j=0}^{s_n+\ell_n}\e_{n,j}z^{n+2-\hat{\d}_{j,0}}P_{n,j}(z).
\]
We will have, for $0\le j<s_n+\ell_n$, that $P_n(z)\mid P_{n,j}(z)\mid P_{n,j+1}(z)\mid P_{n+1}(z)$.

At each step, we shall choose $\e_{n,j}$ with
\begin{equation}\label{ed}
    0<|\e_{n,j}|<\nu_{n,j},
\end{equation}
where
\begin{center}
$\nu_{n,j}:=\frac{\Theta(n,j)}{L(P_{n,j})2^{j+2} n^{n+2+\deg P_{n,j}}}$ \;and\; $\Theta(n,j):=\min_{k\in [n+1,n+2+\deg P_{n,j}]}\{\theta_k\}<1$.
\end{center}
Since $|P_{n,j}(z)|\leq L(P_{n,j})\max\{1,|z|\}^{\deg P_{n,j}}$ and $|z^{n+2-\hat{\d}_{j,0}}|\leq \max\{1,|z|\}^{n+2}$, we have the following for $z\in B(0,R)$ and $n\ge \max\{1,R\}$:
\[
\left|\sum_{j=0}^{s_n+\ell_n}\e_{n,j}z^{n+2-\hat{\d}_{j,0}}P_{n,j}(z)\right|<\sum_{j=0}^{s_n+\ell_n}\frac1{2^{j+2}}\left(\frac{\max\{1,R\}}{n}\right)^{n+2+\deg P_{n,j}}<\left(\frac{\max\{1,R\}}{n}\right)^{n+2}.
\]
Thus the series \[\sum_{n\geq 2}\sum_{j=0}^{s_n+\ell_n}\e_{n,j}z^{n+2-\hat{\d}_{j,0}}P_{n,j}(z)\] converges uniformly in any of these balls (note that $\deg P_{n,j}$ is a non-decreasing function in $j$).

\subsubsection{Inductive construction of $f_{n+1}$ from $f_n$} Supposing that we have a function $f_{n}$ satisfying (i)-(vii), we now construct $f_{n+1}$ with these properties. 

Before continuing with the proof, we shall make some conventions in order to make the text simpler and clearer. For a positive real number $\e$, we say that $h_1(z)$ is an {\it $\e$-perturbation} of $h_2(z)$ if $(h_2(z)-h_1(z))/\e$ is a nonzero polynomial with coefficients bounded by $1$ in norm. Note that all the intermediate functions (from $f_n(z)$ to $f_{n+1}(z)$) will have the form $f_J(z)=f_{J-1}(z)+\e_{J} z^{n+2}P_J(z)$, where $P_J(z)$ is a polynomial and $f_{J-1}\in \mathcal{E}$. We then shall say that a complex number $\gamma$ is {\it nailed} in $f_J(z)$ if $P_J(\gamma)=0$. Moreover, we define two types of periodic orbits: a $k$-periodic orbit $\{\b, f_J(\b),\ldots, f^{k-1}_J(\b)\}$ is a {\it nailed} orbit if $f_J^{j}(\b)$ is nailed, for all $j\in [0,k-1]$. On the other hand, the orbit is said to be {\it free} if $f_J^{j}(\b)$ is not nailed, for all $j\in [0,k-1]$. Also, by the definition of $P_J(z)$, a nailed orbit must be algebraic.

Let $B_n=B(0,r_n)$. We define $f_{n,0}(z)$ as
\[
f_{n,0}(z)=f_n(z)+\e_{n,0}z^{n+1}P_{n,0}(z),
\]
for a suitably chosen small positive real number $\e_{n,0}$ (here $P_{n,0}(z)=P_n(z)$). In particular, $f_{n,0}$ will enjoy the following properties:
$$(1)\,\,f_{n,0}^{-1}(\alpha_i)\cap B_n=f_n^{-1}(\alpha_i)\cap B_n, \forall i\in [1, n];$$
$$(2)\,\,f'_{n,0}(w)=f'_n(w)\neq 0, \forall w\in f_{n,0}^{-1}(\{\a_1,\a_2,\ldots, \a_{n+1}\}).$$

By property (vi) of the inductive hypothesis, one has that \break $f_n^{-1}(\{\a_1,\a_2,\ldots, \a_n\})\cap \partial B_n=\emptyset$, and any $\tau$ belonging to \break $f_n^{-1}(\{\a_1,\a_2,\ldots, \a_n\})\cap B_n$ is an algebraic number for which $f'_n(\tau)\neq 0$ (it follows that $\tau$ is a double zero of $P_{n}$, by property (ii)). Since $\e_{n,0}$ is small, then the number of zeros (counted with multiplicity) of $f_n(z)-\a_i$ and $f_{n,0}(z)-\a_i$ belonging to $B_n$ are equal by Lemma \ref{l6}. However, every zero of $f_n(z)-\a_i$ in $B_n$ is a zero of $f_{n,0}(z)-\a_i$ and every zero of $f_n(z)-\a_i$ in $B_n$ is simple. Therefore $f_{n,0}^{-1}(\a_i)\cap B_n=f_{n}^{-1}(\a_i)\cap B_n$ for all $i\in [1,n]$. (This argument ensures that, in our construction, no new preimage under $f_{n,0}$ of $\{\a_1,\ldots, \a_n\}$ lying in $B_{n+1}$ will appear apart from preimages under $f_n$.)

Next we show that, except for a countable set of values of $\e_{n,0}$, for any $w\in \C$ with $f_{n,0}(w)\in \{\a_1,\a_2,\ldots, \a_{n+1}\}$ we have $f_{n,0}'(w)\ne 0$, and for each $j\in [1,n+1]$, there is $w\in\C$ with $f_{n,0}(w)=\a_j$. Notice that for each $j\in [1,n+1]$, there is at most one value of $\e_{n,0}$ for which the image of $f_{n_0}$ does not contain $\a_j$ (since $\frac{\a_j-f_n(z)}{z^{n+1}P_{n,0}(z)}$ has an essential singularity at $\infty$). If $w$ is a root of $P_{n,0}(z)$, then $f'_{n,0}(w)=f'_n(w)\neq 0$ (notice that, if $w\in Y_n$, then $f'_n(w)\neq 0$, since the orbit of $w$ by iterations of $f_n$ is periodic and repelling). Then $w$ is a simple root of $f_n(z)-\a_i$. Hence, we may assume that $P_{n,0}(w)\ne 0$. We have $f_{n,0}(z)=f_n(z)+\e_{n,0}g(z)$, where $g(z)=z^{n+1}P_{n,0}(z)$. If $f_{n,0}'(w)=0$, we should have $f_n(w)+\e_{n,0}g(w)=\a_i$ and $f_n'(w)+\e_{n,0}g'(w)=0$. Defining $h_i(z)=f_n(z)-\a_i$, we have $h_i'(z)=f_n'(z)$, and thus \begin{center}
$h_i(w)+\e_{n,0}g(w)=0$ \;and\; $h_i'(w)+\e_{n,0}g'(w)=0$.
\end{center}
Now let $\psi_i(z)=-h_i(z)/g(z)$. Since $h_i(w)+\e_{n,0}g(w)=0$, we have $\psi_i(w)=-h_i(w)/g(w)=\e_{n,0}$. Moreover, $\psi_i'(w)=\frac{h_i(w)g'(w)-h_i'(w)g(w)}{g(w)^2}=0$ since $h_i(w)g'(w)-h_i'(w)g(w)=(h_i(w)+\e_{n,0}g(w))g'(w)-(h_i'(w)+\e_{n,0}g'(w))g(w)=0$. This implies that $\e_{n,0}$ is a singular value of $\psi_i(w)$, and the set of singular values of a meromorphic function is countable; this concludes the argument.

Since $f_{n,0}(z)$ is a transcendental function, we can apply Lemma \ref{l5} to ensure, for a suitable small positive real value of $\e_{n,0}$ the existence of a real number $r_{n+1}>\max\{n+1,r_n\}$ such that $f^{-1}_{n,0}(\{\a_1,\ldots, \a_{n+1}\})$ does not intersect $\partial B(0,r_{n+1})$, where $B_{n+1}:=B(0,r_{n+1})$ contains at least $n+1+D_n$ repelling $k$-periodic orbits of $f_{n,0}(z)$, for all $k\in [2,n+1]$ and at least $n+1+D_n$ fixed points whose eigenvalues do not belong to $\{0,1\}$, where $D_n$ is the number of distinct roots of $P_n(z)$ (notice that the above properties $(1)$ and $(2)$ hold for every $\e_{n,0}$ in a suitable small interval $(0,\delta)$, where we may assume $\delta<\nu_{n,0}$, minus a countable set; Lemma \ref{l5} gives a subinterval $(\delta_1,\delta_2)$ of $(0,\delta)$ of values of $\e_{n,0}$ satisfying the above properties). It follows that $B_{n+1}$ contains at least $n+1$ free repelling $k$-periodic orbits of $f_{n,0}(z)$, for all $k\in [2,n+1]$ and at least $n+1$ free fixed points whose eigenvalues do not belong to $\{0,1\}$.

Now we will prove property (v). Let $c_m$ be the coefficient of $z^m$ in $\sum_{n=2}^m\sum_{j=0}^{s_n+\ell_n}\e_{n,j}z^{n+2-\hat{\d}_{j,0}}P_{n,j}(z)$. It is enough to show that, for any $m\ge 1$, $|c_m|<\theta_m$ (notice that if $n>m$ then all monomials in $z^{n+2-\hat{\d}_{j,0}}P_{n,j}(z)$ have degree larger than $m$). In order to prove this, observe that the modulus of the coefficient of $z^m$ in $z^{n+2-\hat{\d}_{j,0}}P_{n,j}(z)$ is bounded by $L(P_{n,j})$, so, since $$|\e_{n,j}|<\nu_{n,j}=\frac{\Theta(n,j)}{L(P_{n,j})2^{j+2} n^{n+2+\deg P_{n,j}}},$$
and the coefficient of $z^m$ in $z^{n+2-\hat{\d}_{j,0}}P_{n,j}(z)$ can only be nonzero if $n+1\le m\le n+2+\deg P_{n,j}$,
we have that $|c_m|$ is at most
$$\sum_{n=2}^m\sum_{\genfrac{}{}{0pt}{}{j\in [0,s_n+\ell_n]}{n+1\le m\le n+2+\deg P_{n,j}}}\frac{\Theta(n,j)}{2^{j+2} n^{n+2+\deg P_{n,j}}}.$$
Since, if $n+1\le m\le n+2+\deg P_{n,j}$, $\Theta(n,j)=\min_{k\in [n+1,n+2+\deg P_{n,j}]}\{\theta_k\}\le\theta_m$, we conclude that 
$$|c_m|\le \theta_m\cdot \sum_{n=2}^m\sum_{\genfrac{}{}{0pt}{}{j\in [0,s_n+\ell_n]}{n+1\le m\le n+2+\deg P_{n,j}}}\frac{1}{2^{j+2} n^{n+2+\deg P_{n,j}}}<$$
$$\theta_m\cdot \sum_{n=2}^m\sum_{j\ge 0}\frac{1}{2^{j+2} n^{n+2}}<\theta_m\cdot \sum_{n\ge 2}\frac{1}{ n^{n+2}}<\theta_m,$$
which concludes the proof.

Next, we define $f_{n,1}(z)$ by  
\[
f_{n,1}(z)=f_{n,0}(z)+\e_{n,1}z^{n+2}P_{n,1}(z)
\]
(where $P_{n,1}(z)$ is also equal to $P_{n,0}(z)$). Note that $f'_{n,1}(y)\neq 0$, for all $y\in \{\a_1,\ldots,\a_{n}\}\cup (f_n^{-1}(\{\a_1,\a_2,\ldots, \a_{n+1}\}\cap B_{n+1})$, by choosing $\e_{n,1}$ to have sufficiently small norm. Indeed, since $f_{n,1}(z)-f_n(z)$ is a multiple of $z^2P_n(z)$, which is a multiple of $(x-\alpha_j)^2$ for $1\le j\le n$, it follows that $f_{n,1}'(\alpha_j)=f_n'(\alpha_j)\neq 0$ for $1\le j\le n$. On the other hand, since $f_{n,0}'(\tau)\neq 0, \forall \tau\in f_n^{-1}(\{\a_1,\a_2,\ldots, \a_{n+1}\})$ (and in particular for every $\tau$ in the finite set $f_n^{-1}(\{\a_1,\a_2,\ldots, \a_{n+1}\}\cap B_{n+1}$), it follows that, for $|\e_{n,1}|$ small enough, $f_{n,1}(z)=f_{n,0}(z)+\e_{n,1}z^{n+2}P_{n,1}(z)$ also satisfies $f_{n,1}'(\tau)\neq 0, \forall \tau\in f_n^{-1}(\{\a_1,\a_2,\ldots, \a_{n+1}\}\cap B_{n+1}$.

If $\a_{n+1}$ is a root of $P_{n,1}(z)$, then $f_{n,1}(\a_{n+1})=f_{n,0}(\a_{n+1})\in \QQ$, by the definition of the roots of $P_n(z)$ in (ii). Hence, in this case, we are done. Thus, suppose that $\a_{n+1}$ is not a zero of $P_{n,1}(z)$. Then, by density of $\QQ$, there exists such an $\e_{n,1}$ such that
\[
f_{n,1}(\a_{n+1})=f_{n,0}(\a_{n+1})+\e_{n,1}\a_{n+1}^{n+2}P_{n,1}(\a_{n+1})\in \QQ.
\]

Let $f_{n,0}^{-1}(\{\a_1,\a_2,\ldots, \a_{n+1}\})\cap (B_{n+1}\setminus\{0\})=\{\t_1,\t_2,\dots,\t_{m_n}\}$. Note that each $\a_i$ with $1\le i\le n+1$ has at least one preimage in $\{\t_1,\t_2,\dots,\t_{m_n}\}$. For $j\in [1, m_n]$, fix a small ball $B(\t_j,\eta_j)$ which does not intersect $\partial B_{n+1}$ in such a way that the balls $B(\t_j,\eta_j)$ are disjoint. The number $s_n$ of further steps in the construction of $f_{n+1}$ will be the number of elements of $(\{\a_{n+1}\}\cup \{\t_1,\t_2,\dots,\t_{m_n}\})\setminus(X_n\cup \tilde X_n)$. The following $s_n$ perturbations $f_{n,j}$ (with $j\in [1, s_n]$) of $f_{n,0}$ will be taken so close to $f_{n,0}$ that they will be $\e$-perturbations (as defined before, with $\e$ sufficiently small) with the further requirement that the number of zeros (counted with multiplicity) of $f_{n,0}(z)-\a_i$ and $f_{n,j}(z)-\a_i$ in the ball $B(\t_j,\eta_j)$ ($j\le m_n$) are equal, for all $i\in [1,n+1]$ and $j\in [1,s_n]$ (from now on, $\e$ will be called \emph{admissible} by including this extra requirement). Notice that in the balls $B(\t_j,\eta_j), j\le m_n$ these numbers are equal to one, since $\t_j$ is a simple zero of $f_{n,0}(z)-\a_i$ for some $i\in [1,n+1]$. We will take $f_{n+1,0}:=f_{n,s_n}$, and so the number of zeros (counted with multiplicity) of $f_{n,0}(z)-\a_i$ and $f_{n+1,0}(z)-\a_i$ in the balls $B_{n+1}$, $B_n$ and $B(\t_j,\eta_j), j\le m_n$ will be equal, for all $i\in [1,n+1]$. In particular, as before, $f_{n+1,0}^{-1}(\a_i)\cap B_n= f_{n}^{-1}(\a_i)\cap B_n$, for all $i\in [1,n]$ and, for $j\in [1, m_n]$, $f_{n+1,0}$ has only one zero in $B(\t_j,\eta_j)$, which is simple.

Define 
\begin{center}$f_{n,2}(z)=f_{n,1}(z)+\e_{n,2}z^{n+2}P_{n,2}(z)$,
\end{center}
where $P_{n,2}(z)=P_{n,1}(z)(z-\a_{n+1})$ and set $g_{n,2}(z)=z^{n+2}P_{n,2}(z)$. Since we are supposing that $\a_{n+1}$ is not a root of $P_{n,1}(z)$, then $g'_{n,2}(\a_{n+1})\neq 0$ with at most one exception; we can then choose $\e_{n,2}$ admissible (and with small norm) such that $f'_{n,2}(\a_{n+1})\neq 0$.

Let $J$ be the set of indices $j\in [1, m_n]$ such that $\t_j$ does not belong to $\{\a_{n+1}\}\cup X_n\cup \tilde X_n\cup \{0\}$. Let $J=\{j_1,j_2,\dots,j_{s_n-1}\}$ (notice that $J$ has $s_n-1$ elements since $s_n$ is the number of elements of $(\{\a_{n+1}\}\cup \{\t_1,\t_2,\dots,\t_{m_n}\})\setminus(X_n\cup \tilde X_n)$; removing the element $\a_{n+1}$ from this set, we get the $s_n-1$ elements of $J$). For each $i\in [1, s_n-1]$, we will do a perturbation as below. Let $z_i$ be the only element of $f_{n,1+i}^{-1}(\{\a_1,\ldots, \a_{n+1}\})$ in $B(\t_{j_i},\eta_{j_i})$. In the $i$-th step we will guarantee that the element of $f_{n,2+i}^{-1}(\{\a_1,\ldots, \a_{n+1}\})$ in $B(\t_{j_i},\eta_{j_i})$ will belong to $\QQ$, and in subsequent steps the images of these elements by the maps $f_{n,1+j}$ will remain unchanged.

Define $f_{n,3}(z)$ by
\[
f_{n,3}(z)=f_{n,2}(z)+\e_{n,3}z^{n+2}P_{n,3}(z),
\]
where $P_{n,3}(z)=P_{n,2}(z)$. To simplify, we write $\e:=\e_{n,3}$ and $F(\e,z)=f_{n,3}(z)$. Let $w=f_{n,2}(z_1)\in \{\a_1,\ldots,\a_{n+1}\}$. For each algebraic number $\t$ close to $z_1$, let 
$$\e(\t)=\frac{w-f_{n,2}(\t)}{\t^{n+2}P_{n,3}(\t)}$$
be such that $f_{n,3}(\tau)=w$. Since $\frac{\partial F}{\partial z}(0,z_1)=f_{n,2}'(z_1)\neq 0$, for $\t$ close enough to $z_1$, we have $f_{n,3}'(\tau)\ne 0$. We may also choose $\tau$ not belonging to the set $\{f_{n,2}(z): z\text{ is a root of } P_{n,3}\}$. 

Thus, following this construction we will obtain at the end a function $f_{n,s_n}$ such that $f_{n,s_n}^{-1}(\{\a_1,\ldots, \a_{n+1}\})\cap (B_{n+1} \cup (\cup_{j=1}^{m_n}B(\t_j,\eta_j)))$ is a subset of $\QQ$. We set $f_{n+1,0}(z):=f_{n,s_n}(z)$. This function will satisfy the items (i), (iii)-(vi) (with $n$ replaced by $n+1$). 

To finish the construction, we need to deal with the periodic orbits of $f_{n+1,0}$ lying in $B_{n+1}$. Since at each step we have taken $\e$-perturbations, there must exist at least $n+1$ $k$-periodic orbits of $f_{n+1,0}$ lying inside of $B_{n+1}$, for all $k\in [1,n+1]$. Moreover, if $\a\in Y_n$, then $P_{n,s_n-1}(f_n^j(\a))=0$, for all $j\in [0,k]$ and so $f_{n+1,0}^j(\a)=f_n^j(\a)$ for all $j\in [1,k]$ (in other words, the whole orbit of $\a$ is nailed). 

Suppose that
\[\{\b,f_{n+1,0}(\b),\ldots, f_{n+1,0}^{k-1}(\b)\}\]
is a free repelling $k$-periodic orbit of $f_{n+1,0}(z)$, for some $k\in [1,n+1]$. Notice that $\b$ is not nailed (as well as all the elements of its orbit). Take algebraic numbers $\g_0$ very close to $\b$ and $\g_1$ very close to $f_{n+1,0}(\b)$ and let $P_{n+1,1}(z):=P_{n+1,0}(z)$. Since $P_{n+1,1}(\g_0)\neq 0$, there exists a very small and admissible $\e_{n+1,1}$ for which $f_{n+1,0}(\g_0)+\e_{n+1,1}(\g_0)^{n+2}P_{n+1,1}(\g_0)=\g_1$. 
Now, we define $f_{n+1,2}(z)$ as
\[
f_{n+1,2}(z)=f_{n+1,1}(z)+\e_{n+1,2}z^{n+2}P_{n+1,2}(z),
\]
where $P_{n+1,2}(z):=P_{n+1,1}(z)(z-\g_0)^2$. We continue this construction until we arrive at a function $f_{n+1,k}(z)$ and algebraic numbers $\g_0,\ldots, \g_{k-1}$ such that $\g_i$ is very close to $f_{n+1,0}^{i}(\b)$ and $f_{n+1,k}(\g_j)=\g_{j+1}$, for all $(i,j)\in [0,k-1]\times [0,k-2]$.
Finally, we define $f_{n+1,k+1}(z)$ as
\[
f_{n+1,k+1}(z)=f_{n+1,k}(z)+\e_{n+1,k+1}z^{n+2}P_{n+1,k+1}(z),
\]
where $$P_{n+1,k+1}(z):=P_{n+1,1}(z)(z-\g_0)^2\cdots (z-\g_{k-2})^2.$$ To finish, we take a small and admissible $\e_{n+1,k+1}$ for which $f_{n+1,k+1}(\g_{k-1})=\g_0$. This construction works since our original periodic orbit is free, and so we are able to create new algebraic periodic orbits of period $k$ until reaching the required number (and we do not create undesirable nailed algebraic periodic orbits in the process).

\subsubsection{The final part}
By construction, the function 
\[
f(z)=\lim_{m\to \infty}f_m(z)=g(z)+\sum_{n\geq 1}\sum_{j=0}^{s_n+\ell_n}\e_{n,j}z^{n+2-\hat{\d}_{j,0}}P_{n,j}(z)=\sum_{n\geq 0} a_nz^n
\]
is entire and satisfies the desired properties: $f(\QQ)=\QQ, f^{-1}(\QQ)=\QQ, a_n\in K$ for all $n\ge 0$, and $\#\{\o(k,f)\}=s_k$ for all $k\geq 1$. Indeed, for each $m\ge 1$, $f_{k+1}(\a_m)=f_k(\a_m)\in \QQ$ for all $k$ such that $k\ge m$. In particular since $f=\lim_{k\to\infty}f_k$, we have $f(\a_m)=f_m(\a_m)\in \QQ$. Hence $f(\QQ)\subset \QQ$. On the other hand, since $f^{-1}_{m+1}(\{\a_1,\a_2,\ldots, \a_{m}\})\cap B(0,r_m)=f^{-1}_m(\{\a_1,\a_2,\ldots, \a_{m}\})\cap B(0,r_m)=f^{-1}_m(\{\a_1,\a_2,\ldots, \a_{m}\})\cap \overline{B(0,r_m)}=\tilde X_m\cap B(0,r_m)\subseteq \QQ$ and $B(0,r_m)\subset B(0,r_{m+1})$ for all $m\ge 1$, it follows by induction that, for each $k\ge m$,
\[f^{-1}_k(\{\a_1,\a_2,\ldots, \a_{m}\})\cap B(0,r_m)=f^{-1}_m(\{\a_1,\a_2,\ldots, \a_{m}\})\cap B(0,r_m).\]
This implies that 
\[
f^{-1}(\{\a_1,\a_2,\ldots, \a_{m}\})\cap B(0,r_m)=f^{-1}_m(\{\a_1,\a_2,\ldots, \a_{m}\})\cap B(0,r_m)\subseteq \QQ.
\]
Indeed, $f=\lim_{k\to\infty}f_k$, and so 
\[
f^{-1}(\{\a_1,\a_2,\ldots, \a_{m}\})\cap B(0,r_m)\supset f^{-1}_m(\{\a_1,\a_2,\ldots, \a_{m}\})\cap B(0,r_m).
\]
Now, if there were another element $w\in f^{-1}(\{\a_1,\a_2,\ldots, \a_{m}\})\cap B(0,r_m)$, it should be at a positive distance from the finite set $f^{-1}_m(\{\a_1,\a_2,\ldots, \a_{m}\})\cap B(0,r_m)$, but, since $f=\lim_{n\to \infty} f_n$, arbitrarily close to $w$ there should be, for large $k$, an element of $f^{-1}_k(\{\a_1,\a_2,\ldots, \a_{m}\})$ (also by Rouch\' e's theorem), which contradicts the equality 
\[
f^{-1}_k(\{\a_1,\a_2,\ldots, \a_{m}\})\cap B(0,r_m)=f^{-1}_m(\{\a_1,\a_2,\ldots, \a_{m}\})\cap B(0,r_m).
\]
Hence $f^{-1}(\QQ)\subset \QQ$ (and $f^{-1}(\QQ)=\QQ$, since $f(\QQ)\subset \QQ$).
Moreover, since, for each integer $j\in [1,n]$, there is $\tau\in \tilde X_n$ such that $f_n(\tau)=\a_j$, and, since $P_n(z)\mid P_{n+1}(z)\ \forall n\ge 1$ we shall have, for each $k\ge m$, that $f_k(\tau)=\a_j$ and so $f(\tau)=\a_j$. In conclusion, $f(\QQ)=\QQ$ and $f^{-1}(\QQ)=\QQ$. 

We also prove, in an analogous way, that $\#\{\o(k,f)\}=s_k$, $\forall k\geq 1$ (since, for every natural number $m$, $Y_m^{(k)}$ is a union of exactly $\min\{m,s_k\}$ algebraic periodic orbits of period $k$ of $f_m$ contained in $B(0,r_{m})$, and is contained in the set of roots of $P_m$).

Moreover, the function $f(z)=\sum_{n\geq 0}a_nz^n$ is transcendental, since it is a non-polynomial entire function (recall that $a_n\neq 0$ $\forall n\geq 0$). This completes the proof.\qed


\begin{bibdiv}
\begin{biblist}

\bib{bergweiler1}{article}{
   author={Bergweiler, Walter},
   title={Periodic points of entire functions: proof of a conjecture of
   Baker},
   journal={Complex Variables Theory Appl.},
   volume={17},
   date={1991},
   number={1-2},
   pages={57--72}
}

\bib{bergweiler2}{article}{
   author={Bergweiler, Walter},
   title={Ahlfors theory and complex dynamics: periodic points of entire
   functions},
   note={Complex dynamics and related fields (Japanese) (Kyoto, 2001)},
   journal={S\={u}rikaisekikenky\={u}sho K\={o}ky\={u}roku},
   number={1269},
   date={2002},
   pages={1--11},
}

\bib{mahler}{book}{
   author={Mahler, Kurt},
   title={Lectures on transcendental numbers},
   series={Lecture Notes in Mathematics, Vol. 546},
   publisher={Springer-Verlag, Berlin-New York},
   date={1976}
}

\bib{marques-moreira1}{article}{
   author={Marques, Diego},
   author={Moreira, Carlos Gustavo},
   title={A positive answer for a question proposed by K. Mahler},
   journal={Math. Ann.},
   volume={368},
   date={2017},
   number={3-4},
   pages={1059--1062}
}

\bib{marques-moreira2}{article}{
   author={Marques, Diego},
   author={Moreira, Carlos Gustavo},
   title={On a stronger version of a question proposed by K. Mahler},
   journal={J. Number Theory},
   volume={194},
   date={2019},
   pages={372--380}
}

\end{biblist}
\end{bibdiv}

\end{document}